\begin{document}

\newtheorem*{theo}{Theorem}
\newtheorem*{pro}{Proposition}
\newtheorem*{cor}{Corollary}
\newtheorem*{lem}{Lemma}
\newtheorem{theorem}{Theorem}[section]
\newtheorem{corollary}[theorem]{Corollary}
\newtheorem{lemma}[theorem]{Lemma}
\newtheorem{proposition}[theorem]{Proposition}
\newtheorem{conjecture}[theorem]{Conjecture}
\newtheorem{definition}[theorem]{Definition}
\newtheorem{problem}[theorem]{Problem}
\newtheorem{remark}[theorem]{Remark}
\newtheorem{example}[theorem]{Example}
\newcommand{\Naturali}{{\mathbb{N}}}
\newcommand{\Reali}{{\mathbb{R}}}
\newcommand{\Complessi}{{\mathbb{C}}}
\newcommand{\Toro}{{\mathbb{T}}}
\newcommand{\Relativi}{{\mathbb{Z}}}
\newcommand{\HH}{\mathfrak H}
\newcommand{\KK}{\mathfrak K}
\newcommand{\LL}{\mathfrak L}
\newcommand{\as}{\ast_{\sigma}}
\newcommand{\tn}{\vert\hspace{-.3mm}\vert\hspace{-.3mm}\vert}
\def\mA{{\mathfrak A}}
\def\A{{\mathcal A}}
\def\mB{{\mathfrak B}}
\def\B{{\mathcal B}}
\def\C{{\mathcal C}}
\def\D{{\mathcal D}}
\def\F{{\mathcal F}}
\def\H{{\mathcal H}}
\def\J{{\mathcal J}}
\def\K{{\mathcal K}}
\def\L{{\cal L}}
\def\N{{\cal N}}
\def\M{{\cal M}}
\def\O{{\mathcal O}}
\def\P{{\cal P}}
\def\S{{\cal S}}
\def\T{{\cal T}}
\def\U{{\cal U}}
\def\W{{\cal W}}
\def\b{\lambda_B(P}
\def\j{\lambda_J(P}
\def\uu{\underline{u}}

\title{Automorphisms of the UHF algebra that do not extend to the Cuntz algebra}

\author{Roberto Conti}

\date{}
\maketitle

\renewcommand{\sectionmark}[1]{}

\noindent
{\small \date{\today}}

\vspace{7mm}
\begin{abstract}
Automorphisms of the canonical core UHF-subalgebra $\F_n$
of the Cuntz algebra $\O_n$
do not necessarily extend to automorphisms of $\O_n$.
Simple examples are discussed within the family of infinite tensor products of 
(inner) automorphisms of the matrix algebras $M_n$.
In that case, 
necessary and sufficient conditions for the extension property are presented.
It is also addressed the problem of extending to $\O_n$ the automorphisms
of the diagonal $\D_n$, which is a regular MASA with Cantor spectrum.
In particular, it is shown the existence of product-type automorphisms of $\D_n$
that are not extensible to (possibly proper) endomorphisms of $\O_n$.
\end{abstract}

\vfill
\noindent {\bf MSC 2000}: 46L40, 46L05, 37B10

\vspace{3mm}
\noindent {\bf Keywords}: Cuntz algebra, endomorphism, automorphism,
 core $UHF$-subalgebra, regular MASA.

\newpage

\section{Introduction}
Among $C^*$-algebras, the class of UHF algebras is perhaps one of the first nontrivial 
(i.e., infinite-dimensional) examples that comes to mind and it has been investigated in 
depth in the outstanding work by Glimm in the early 60s, \cite{Gl}. 
Since then, the study of $C^*$-algebras has enjoyed a dramatic increase in perspectives and results.
At some point later the Cuntz algebras $\O_n$ made their appearence on stage, \cite{Cun1}. 
These $C^*$-algebras are quite different from UHF algebras as, for instance, they are traceless (and purely infinite).
However, it is well-known that $\O_n$ contains
in canonical fashion a copy of the UHF algebra of type $n^\infty$
(that is, the $C^*$-algebraic tensor product of countably infinite 
copies of $M_n$, the $n \times n$ matrix algebra over $\mathbb C$)
as the canonical core UHF-subalgebra $\F_n$.

In contrast to the case of $\F_n$, the study of automorphisms of $\O_n$ is quite challenging.
Typical problems range from the construction of explicit examples to global properties of the group 
${\rm Aut}(\O_n)$ and some notable subgroups and quotients.

Among other things, the group ${\rm Aut}(\O_n,\F_n)$ of all automorphisms of $\O_n$
leaving $\F_n$ globally invariant has sporadically made its appearence in the literature, \cite{Cun2,CS,CRS}. 
(Besides, it turns out that 
${\rm Aut}(\O_n,\F_n) = {\rm Aut}(\O_n) \cap \{\lambda_u \ | \ u \in \U(\F_n)\}$
and thus the examples of permutation automorphisms of $\O_n$ exhibited in \cite{CS} 
restrict to automorphisms of $\F_n$ that are also easily seen to be outer.)
In this short note, we will add another piece of information 
by showing that the canonical restriction map 
\begin{equation} 
r: {\rm Aut}(\O_n,\F_n) \to {\rm Aut}(\F_n)
\end{equation}
is not surjective. (By \cite[Corollary 4.10]{CRS}, the kernel of this group 
homomorphism consists precisely of the gauge automorphism.) A 
few remarks are in order. First of all, it is plain that all inner automorphisms of $\F_n$ extend to
(inner) automorphisms of $\O_n$. Secondly, it is well known (see e.g. \cite[Coro IV.5.8]{Dav}) that
${\rm Aut}(\F_n) = \overline{\rm Inn}(\F_n)$, that is every automorphism of $\F_n$ is approximately inner
in the sense that it is the limit of inner automorphisms of $\F_n$ in the topology of pointwise norm-convergence.
Therefore our result in particular shows the existence of sequences 
of inner automorphisms of $\O_n$ that converge 
pointwise in norm 
on $\F_n$ but not on $\O_n$. Moreover it makes it clear that there are strict inclusions 
(cf. \cite{Arc})
\begin{equation}
{\rm Inn}(\F_n) \subsetneq r\big({\rm Aut}(\O_n,\F_n)\big) \subsetneq {\rm Aut}(\F_n).
\end{equation}
Finally, we deduce that there are automorphisms of $\F_n$ that do not extend to $\O_n$ even
as proper endomorphisms as the latter possibility has been ruled out in \cite[Corollary 4.9]{CRS}. 
On the contrary, this last fact is not true for the canonical Cartan subalgebra $\D_n$ of $\O_n$
for there are proper endomorphisms of $\O_n$ that restrict to automorphisms of $\D_n$, see \cite{CS}.
Still one can show the existence of automorphisms of $\D_n$ that do not extend to endomorphisms of $\O_n$.

\medskip

A few words on the notation (cf. \cite{CS}): 
for each integer $n \geq 2$
the Cuntz algebra $\O_n$ is the universal $C^*$-algebra generated by $n$ isometries 
$S_1, \ldots, S_n$ whose ranges sum up to 1. 
There is a one-to-one correspondence $v \mapsto \lambda_v$ between
unitaries in $\O_n$ and unital $*$-endomorphism of $\O_n$ 
that associates to $v \in \U(\O_n)$ the endomorphism
$\lambda_v$ defined by $\lambda_v(S_i)=vS_i$, $i=1,\ldots,n$. 
The canonical core UHF-subalgebra $\F_n$ is the norm-closure of the union 
of the matrix subalgebras 
$\F_n^k \simeq M_n \otimes \ldots \otimes M_n$ ($k$ factors), where 
$$\F_n^k 
= {\rm span}\{{S_{\alpha_1}\ldots S_{\alpha_k}{S_{\beta_k}^* \ldots 
S_{\beta_1}^*, \ 1 \leq \alpha_1,\ldots,\alpha_k,\beta_1,\ldots,\beta_k \leq n}}\} \ . $$
$\F_n$ possesses a unique normalized trace, which we denote $\tau$. 
$\D_n$ is the diagonal subalgebra of $\F_n$
and it is the norm-closure
of the union of the commutative algebras 
$\D_n^k = \D_n \cap \F_n^k \simeq {\mathbb C}^{n^k}$. We denote by 
 $\varphi$ the canonical endomorphism of $\O_n$, defined by
$\varphi(x)=\sum_{i=1}^n S_i x S_i^*$. This shift endomorphism satisfies 
$S_i a = \varphi(a) S_i$ for all $a \in \O_n$ and all $i=1,\ldots,n$.
For a unital $C^*$-algebra $B$, ${\rm Aut}(B)$ and ${\rm Inn}(B)$
are the groups of automorphisms and inner automorphisms of $B$, respectively.
For unital $C^*$-algebras $A \subset B$,
${\rm Aut}(B,A)$ and ${\rm Aut}_A(B)$ are the group of automorphisms
of $B$ leaving $A$ globally and pointwise invariant, respectively.


\section{Main result}
We refer to \cite[Chapter VI]{EvKa} for generalities about the UHF 
algebras and their automorphisms.
In order to simplify the notation we will often represent elements of $\F_n$ 
by tensor products of matrices,
through the canonical identification of $\F_n$ with $\otimes_{i=1}^\infty M_n$.

Given a sequence of unitaries $\uu = (u_i)$ with $u_i \in \U(\F_n^1) \simeq U(n)$,
we consider the associated automorphism $\alpha_{\uu}$ of $\F_n$ such that, 
for all multiindices $\alpha=(\alpha_1,\ldots,\alpha_k)$ and 
$\beta=(\beta_1,\ldots,\beta_k)$
of the same length $k$ and for all $k \geq 1$,
\begin{equation}
\alpha_{\uu} (S_\alpha {S_\beta}^*) 
= u_1 S_{\alpha_1} u_2 S_{\alpha_2} \cdots u_k S_{\alpha_k}
S_{\beta_k}^* u_k^* \cdots S_{\beta_2}^*u_2^* S_{\beta_1}^* u_1^*
\end{equation}
In the tensor product picture $\alpha_{\uu}$ is nothing but the infinite tensor product
automorphism $\otimes_{i=1}^\infty {\rm Ad}(u_i)$. It is also clear that if 
$\lim_{k\to \infty} u_1 \otimes u_2 \otimes \ldots \otimes u_k \otimes 1 \otimes 1 \otimes \ldots =: u$
exists in $\otimes_i M_n$ then $\otimes_i {\rm Ad}(u_i) = {\rm Ad}(u)$ is inner, 
while the converse holds true whenever $1 \in \sigma(u_i)$, for all $i$'s
(the latter assumption can always be satisfied by rotating the $u_i$'s if necessary),
see \cite[Theorem 6.3]{EvKa}.

Of course, if $\alpha_{\uu} = {\rm Ad}(u)$ is inner then it extends to an inner automorphism
of $\O_n$, namely $\lambda_{u \varphi(u^*)}$. It is quite possible that even though 
$\alpha_{\uu}$ is outer, it still extends to an automorphism of $\O_n$. A simple such 
example arises in the case where $\uu$ is a nonscalar constant sequence: 
$u_1 = u_2 = u_3 = \ldots$. Then $\alpha_{\uu} = \otimes_i {\rm Ad}(u_1)$ is 
outer on $\F_n$ and extends to the (still outer) 
Bogolubov automorphism $\lambda_{u_1}$ of $\O_n$.
At this point one could suspect that the possibility of  
extending $\alpha_{\uu}$ to $\O_n$ depends on whether 
\begin{equation}\label{eqn}
\lim_{k \to \infty} u_1 \otimes u_2 u_1^* \otimes u_3 u_2^* \otimes \ldots \otimes u_{k+1}u_k^* 
\otimes 1 \otimes 1 \ldots
\end{equation}
exists in $\otimes_i M_n$. This is indeed the case, as the following theorem shows. 

\begin{theorem}\label{mainthm}
Let $u_1, u_2, u_2, \ldots$ be an infinite sequence of unitaries in $\U(\F_n^1)$. 
If the limit in equation (\ref{eqn}) exists and thus defines a unitary $v \in \F_n$ then $\lambda_v$
is an automorphism of $\O_n$ such that, in restriction to $\F_n$, $\lambda_v$ 
coincides with $\alpha_{\uu} = \otimes _i {\rm Ad}(u_i)$.
Conversely, suppose that $\alpha_{\uu}$ extends to an endomorphism of $\O_n$.
Then there are phases $(e^{i \theta_k})_k$ such that
the limit in eq. (\ref{eqn}) exists for the sequence $(u'_k)$, 
where $u'_k = e^{i\theta_k} u_k$.
\end{theorem}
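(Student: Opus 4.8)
The plan is to prove the two implications separately: the direct one is a telescoping computation together with simplicity of $\O_n$, while the converse rests on \cite[Corollary 4.9]{CRS} (a proper endomorphism of $\O_n$ cannot restrict to an automorphism of $\F_n$) plus a self‑referential identity for auxiliary unitaries built from $v$.

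For the direct part I would set $w_1=u_1$ and $w_m=u_mu_{m-1}^*$ for $m\ge 2$, so that by hypothesis $v=\prod_{m\ge 1}\varphi^{m-1}(w_m)$, the factors commuting pairwise because they sit in distinct tensor slots. Using $S_ia=\varphi(a)S_i$ repeatedly one gets, for $|\alpha|=|\beta|=k$, that $\lambda_v(S_\alpha S_\beta^*)=v^{(k)}S_\alpha S_\beta^*(v^{(k)})^*$ with $v^{(k)}:=v\varphi(v)\cdots\varphi^{k-1}(v)$; regrouping the telescoping products slot by slot shows that the first $k$ tensor factors of $v^{(k)}$ are $u_1\otimes\cdots\otimes u_k$, whence $\lambda_v$ maps $\F_n$ into itself and coincides there with $\otimes_i{\rm Ad}(u_i)=\alpha_{\uu}$. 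Since $\alpha_{\uu}$ is onto $\F_n$, pick $w\in\F_n$ with $\lambda_v(w)=v^*$; then $\lambda_v(wS_i)=v^*vS_i=S_i$ for each $i$, so $\lambda_v$ is surjective, and it is injective because $\O_n$ is simple, hence an automorphism.

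For the converse, as the extending endomorphism restricts to the automorphism $\alpha_{\uu}$ of $\F_n$, by \cite[Corollary 4.9]{CRS} it is an automorphism of $\O_n$ leaving $\F_n$ invariant, hence of the form $\lambda_v$ with $v\in\U(\F_n)$. Writing $U_k:=u_1\otimes\cdots\otimes u_k$, the identities $\lambda_v|_{\F_n^k}={\rm Ad}(v^{(k)})|_{\F_n^k}$ and $\alpha_{\uu}|_{\F_n^k}={\rm Ad}(U_k)|_{\F_n^k}$ force $U_k^*v^{(k)}$ into $(\F_n^k)'\cap\F_n=\varphi^k(\F_n)$, so $v^{(k)}=U_k\,\varphi^k(y_k)$ for a unique $y_k\in\U(\F_n)$, with $y_0=1$. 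Comparing the two factorizations of $v^{(k+j)}=v^{(k)}\varphi^k(v^{(j)})=U_{k+j}\varphi^{k+j}(y_{k+j})$ and using $U_{k+j}=U_k\varphi^k(u_{k+1}\otimes\cdots\otimes u_{k+j})$ gives $y_k=(u_{k+1}\otimes\cdots\otimes u_{k+j})\,\varphi^j(y_{k+j})\,(v^{(j)})^*$ for all $j\ge 0$; setting $w_m:=y_{m-1}y_m^*$ this yields the self‑referential identity $w_{k+1}=(u_{k+1}\otimes\cdots\otimes u_{k+j})\,\varphi^j(w_{k+1+j})\,(u_{k+2}\otimes\cdots\otimes u_{k+1+j})^*$.

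Now apply the trace‑preserving conditional expectation $E_j\colon\F_n\to\F_n^j$ to this identity: it is an $\F_n^j$‑bimodule map sending $\varphi^j(\F_n)$ to $\tau(\cdot)1$, so $\|w_{k+1}-E_j(w_{k+1})\|=\|w_{k+1+j}-\tau(w_{k+1+j})1\|$; fixing $k$ and letting $j\to\infty$, the left side tends to $0$, hence $\|w_m-\tau(w_m)1\|\to 0$ and $|\tau(w_m)|\to 1$. A parallel computation, again exploiting that $U_k$ commutes with $\varphi^k(\F_n)$, identifies the $k$‑th partial product $P_k=u_1\otimes u_2u_1^*\otimes\cdots\otimes u_{k+1}u_k^*$ in (\ref{eqn}) with $v\,\varphi^{k+1}(w_{k+1})$, so $\|P_k-\tau(w_{k+1})v\|\to 0$. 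Since replacing $(u_k)$ by $(u'_k)=(e^{i\theta_k}u_k)$ only multiplies $P_k$ by the overall phase $e^{i\theta_{k+1}}$, choosing $e^{i\theta_{k+1}}=\overline{\tau(w_{k+1})}/|\tau(w_{k+1})|$ for all large $k$ (and arbitrarily for the finitely many remaining indices) makes the partial products of the modified sequence converge, in fact to $v$. I expect the main obstacle to be precisely the step $\|w_m-\tau(w_m)1\|\to 0$: one must see that the $w_m$ approach the scalars in \emph{operator} norm, whereas $|\tau(w_m)|\to 1$ by itself only gives convergence in $2$‑norm, and it is the combination of the self‑referential identity with the strong convergence $E_j\to{\rm id}$ that upgrades it; the other indispensable ingredient is securing $v\in\U(\F_n)$ at the outset, without which the conditional‑expectation argument does not even begin.
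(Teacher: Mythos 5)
Your proof is correct and follows essentially the same route as the paper's: the same factorization $v^{(k)}=U_k\,\varphi^k(y_k)$ extracted from $(\F_n^k)'\cap\F_n=\varphi^k(\F_n)$, the same conditional expectations $E_k$, and the same phase adjustment, with your ``self-referential identity'' for the $w_m$ and the paper's observation that $\|E_{k+1}(v)\|=|\tau(z_{k+1})|\to\|v\|=1$ being two equivalent ways of exploiting $E_k(v)\to v$. The only real divergence is that in the direct implication you prove surjectivity of $\lambda_v$ by hand (pulling $v^*$ back through $\alpha_{\uu}$ and using simplicity) where the paper simply invokes \cite[Corollary 4.9]{CRS}.
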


\begin{proof}
If $v$ is defined by the limit as above, then one can 
easily check that $\lambda_v$ coincides with $\alpha_{\uu}$ in restriction to $\F_n$,
and thus $\lambda_v \in {\rm Aut}(\O_n)$ by \cite[Corollary 4.9]{CRS}. 

Conversely, let $\lambda_w$ be an endomorphism of $\O_n$ such that $\lambda_w(x)=\alpha_{\uu}(x)$
for all $x \in \F_n$. Then $\lambda_w \in {\rm Aut}(\O_n)$ and $w \in \F_n$. Now, 
$$w S_i S_j^* w^* = u_1 S_i S_j^* u_1^*$$ for all $i,j=1,\ldots,n$ and thus 
$w^* u_1 \in (\F_n^1)' \cap \F_n = \varphi(\F_n)$, 
that is $w=u_1 \varphi(z_1)$ for some $z_1 \in \U(\F_n)$. 
But then 
$$w \varphi(w) S_i S_j S_k^* S_h^* \varphi(w^*)w^* 
= u_1 \varphi(u_2)S_i S_j S_k^* S_h^* \varphi(u_2^*)u_1^*$$
for all $i,j,h,k =1,\ldots,n$, 
that is $\varphi(w^*)w^*u_1\varphi(u_2) \in (\F_n^2)' \cap \F_n = \varphi^2(\F_n)$.
By the above, this means that $w^* z_1^* u_2 = \varphi(z_1^*)u_1^*z_1^*u_2 \in \varphi(\F_n)$. 
Thus $u_1^* z_1^* u_2 = \varphi(z_2^*) \in \varphi(\F_n)$ for some unitary $z_2 \in \U(\F_n)$
from which $z_1^* = u_1 \varphi(z_2^*)u_2^* = \varphi(z_2^*)u_1 u_2^*$ and consequently
$$w = u_1 \varphi(u_2u_1^*)\varphi^2(z_2) \ .$$
Repeating this argument one obtains that, for each positive integer $k$,
$$w = u_1 \varphi(u_2 u_1^*)\varphi^2(u_3 u_2^*) 
\cdots \varphi^k(u_{k+1}u_k^*)\varphi^{k+1}(z_{k+1})$$
for a suitable unitary $z_k \in \F_n$.
Moreover, one can find a sequence $(e^{i \theta_k})_k$ such that, 
after replacing $u_k$ with $e^{i\theta_k}u_k$, 
one can always assume that $\tau(z_k) \in {\mathbb R}_+$.
Consider now the $\tau$-invariant conditional expectation $E_k: \F_n \to \F_n^k$, 
$E_k = {\rm id}_k \otimes \tau$, with ${\rm id}_k$ the identity on $\F_n^k$,
for which one has $x = \lim_k(E_k(x))$ for all $x \in \F_n$.
By the above, it is clear that, for all $k \geq 1$, 
$E_k(w) = u_1 \otimes \ldots \otimes u_{k+1}u_k^* \tau(z_{k+1})$
and thus
$1 = \|w\| = \lim_k \|u_1 \otimes \ldots \otimes u_{k+1}u_k^*\| \tau(z_{k+1}) 
= \lim_k \tau(z_{k+1})$.
That is,
$$\lim_k \|w - u_1 \otimes \ldots \otimes u_{k+1}u_k^*\| 
= \lim_k \|w - u_1 \otimes \ldots \otimes u_{k+1}u_k^* \tau(z_{k+1})\| 
= 0 
\ . $$
\end{proof}

Automorphisms of $\F_n$ of the form $\alpha_{\uu}$ satisfy
$\alpha_{\uu}(\F_n^k) = \F_n^k$ for every $k$.
An endomorphism of the Cuntz algebra is said to be localized 
if it is induced by a unitary in some finite matrix algebra, 
i.e. in $\bigcup_k \F_n^k$, \cite{CP}.

\begin{corollary}
Suppose that $\alpha_{\uu}$ extends to an automorphism of the 
Cuntz algebra. Then the following conditions are equivalent:
\begin{itemize}
\item[(a)] one extension of $\alpha_{\uu}$ is localized;
\item[(b)] all extensions of $\alpha_{\uu}$ are localized;
\item[(c)] one has eventually $u_{k+1}u_k^* \in {\mathbb T}1$.
\end{itemize}
\end{corollary}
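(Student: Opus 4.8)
The plan is to reduce the whole statement to a single structural fact about the unitary $v$ produced by Theorem~\ref{mainthm}. First I would observe that, since (by \cite[Corollary 4.10]{CRS}) the kernel of $r$ consists of the gauge automorphisms, which fix $\F_n$ pointwise, and since $\rho_z \circ \lambda_w = \lambda_{zw}$ for $w \in \F_n$ and $z \in \mathbb{T}$, any two extensions of $\alpha_{\uu}$ to $\O_n$ differ by a scalar: they form exactly the family $\{\lambda_{zv} : z \in \mathbb{T}\}$, where $v$ is the unitary of Theorem~\ref{mainthm}. Consequently $\lambda_{zv}$ is localized if and only if $zv \in \bigcup_k \F_n^k$, which (as $\F_n^k$ is a $\mathbb{C}$-subalgebra and $z \neq 0$) is independent of $z$. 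This gives $(a) \Leftrightarrow (b)$ immediately and reduces the corollary to showing that $v \in \bigcup_k \F_n^k$ is equivalent to $(c)$. I would also note at the outset that $(a)$, $(b)$ and $(c)$ are all insensitive to replacing each $u_k$ by $e^{i\theta_k}u_k$ (the automorphism $\alpha_{\uu}$ does not change, and $u_{k+1}u_k^*$ only acquires a phase); so by Theorem~\ref{mainthm} we may and will assume that $v := \lim_k u_1 \otimes u_2u_1^* \otimes \cdots \otimes u_{k+1}u_k^*$ exists in $\F_n$ and is unitary.

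For the implication $(c) \Rightarrow v \in \bigcup_k \F_n^k$, suppose $u_{k+1}u_k^* = \mu_k 1$ with $\mu_k \in \mathbb{T}$ for all $k \geq m$. Then for $k \geq m$ the $k$-th approximant equals $(\mu_m \cdots \mu_k)\,(u_1 \otimes u_2u_1^* \otimes \cdots \otimes u_m u_{m-1}^*)$, i.e.\ a scalar times a fixed unitary of $\F_n^m$; since $v$ exists the scalars converge, and hence $v \in \F_n^m$.

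For the converse, suppose $v \in \F_n^m$. Apply the $\tau$-preserving conditional expectation $E_m = {\rm id}_m \otimes \tau : \F_n \to \F_n^m$, which fixes $v$. For $k \geq m$ the approximant $u_1 \otimes u_2u_1^* \otimes \cdots \otimes u_{k+1}u_k^*$ lies in $\F_n^{k+1}$ and $E_m$ traces out its last $k+1-m$ tensor legs, yielding $(u_1 \otimes \cdots \otimes u_m u_{m-1}^*)\prod_{j=m}^{k}\tau(u_{j+1}u_j^*)$; letting $k \to \infty$ gives $v = (u_1 \otimes \cdots \otimes u_m u_{m-1}^*)\prod_{j\geq m}\tau(u_{j+1}u_j^*)$. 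Taking norms and using $\|v\| = 1$ forces $\prod_{j\geq m}|\tau(u_{j+1}u_j^*)| = 1$; since each factor is $\leq 1$ and the partial products are non-increasing, every factor must be exactly $1$. Finally, a unitary $u \in M_n$ with $|\tau(u)| = 1$ is necessarily scalar — its eigenvalues lie on $\mathbb{T}$ and the normalized trace is a convex combination of them with strictly positive weights, so strict convexity of the closed unit disc forces them to coincide. Hence $u_{k+1}u_k^* \in \mathbb{T}1$ for all $k \geq m$, which is $(c)$.

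I expect the only genuinely delicate part to be this last implication, and there the real care is bookkeeping rather than ideas: one must keep track of the fact that $v$ is \emph{precisely} the limit of the approximants $u_1 \otimes u_2u_1^* \otimes \cdots \otimes u_{k+1}u_k^*$ (which is exactly what the phase renormalization from Theorem~\ref{mainthm} buys us) and of which tensor legs $E_m$ acts on when these approximants sit in $\F_n^{k+1}$. Once that is arranged, the two elementary observations — an infinite product of numbers in $[0,1]$ equal to $1$ has all factors $1$, and $|\tau(u)|=1$ forces $u$ scalar — close the argument.
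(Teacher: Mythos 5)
Your proof is correct and follows exactly the two facts the paper's own (very terse) proof invokes: that any two extensions differ by a gauge automorphism, so all extensions are $\lambda_{zv}$ with $v$ the phase-corrected limit from Theorem~\ref{mainthm}, and that localization of $\lambda_{zv}$ amounts to $v\in\bigcup_k\F_n^k$. The conditional-expectation computation and the elementary facts about infinite products and $|\tau(u)|=1$ for unitary $u$ correctly supply the details the paper leaves implicit, so this is essentially the paper's argument, fully written out.
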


\begin{proof}
This follows from the fact that any two such extensions must differ by a 
gauge automorphism and also that a unitary implementing one of them is obtained
through equation (\ref{eqn}) up to phases, as explained above.
\end{proof}

\section{Outlook}

We would like to mention a few related problems. 
On one hand, one should investigate the detailed structure of ${\rm Aut}(\O_n,\F_n)$ and 
find an intrinsic characterization of automorphisms of $\F_n$ that extend to $\O_n$.
On the other hand, one should study the analogous problems for the extension of automorphisms 
from the diagonal $\D_n$ to $\F_n$ and from $\D_n$ to $\O_n$.

In this respect, we can add something more about extension of automorphisms
of $\D_n$ to $\O_n$. Especially, we are now ready to show that 
not all automorphisms of $\D_n$ are extensible to (possibly proper) endomorphisms of $\O_n$. 
This will easily follow also from Theorem \ref{endaut} below, but the following 
observation is more in line with the criterion given in Theorem \ref{mainthm}. 

\begin{proposition}\label{diagonal}
Let $\alpha$ be a product type automorphism of the diagonal, 
i.e. $\alpha(\varphi^k(\D_n^1))=\varphi^k(\D_n^1)$ for all $k \geq 0$. Then $\alpha$ 
extends to a (possibly proper) endomorphism of $\O_n$ if and only if the action on each 
$\varphi^k(\D_n^1)$ is eventually identical. In that case, $\alpha$ extends to a permutation 
automorphism of $\O_n$. 
\end{proposition}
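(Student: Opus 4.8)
The plan is to establish the two implications separately, along the lines of Theorem \ref{mainthm}: first read off the combinatorial skeleton of a hypothetical extension, then run a rigidity argument.

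\medskip
\noindent\emph{Sufficiency.} Suppose the action on $\varphi^k(\D_n^1)$ is, for all $k\ge N$, one fixed permutation $c$ of $\{1,\dots,n\}$; write $\alpha=\bigotimes_{k\ge0}\sigma_k$ with $\sigma_k=c$ for $k\ge N$. I would put $v_c=\sum_iS_{c(i)}S_i^*\in\F_n^1$, so that $\lambda_{v_c}$ is the permutation automorphism of $\O_n$ with $\lambda_{v_c}(S_i)=S_{c(i)}$, hence $\lambda_{v_c}|_{\D_n}=\bigotimes_kc$, and factor $\alpha=\beta\circ(\lambda_{v_c}|_{\D_n})$ with $\beta=\bigotimes_k\beta_k$, $\beta_k:=\sigma_kc^{-1}$ (so $\beta_k=\mathrm{id}$ for $k\ge N$). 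Now $\beta=\alpha_{\uu}|_{\D_n}$ for the sequence $\uu$ of permutation unitaries of the $\beta_k$, which is eventually $1$; hence in (\ref{eqn}) the factors $u_{k+1}u_k^*$ are eventually $1$, the limit $v$ exists in $\F_n^{N+1}$, and by Theorem \ref{mainthm} $\lambda_v$ is an automorphism restricting to $\alpha_{\uu}$ on $\F_n$, hence to $\beta$ on $\D_n$. Therefore $\lambda_v\circ\lambda_{v_c}$ is a composition of permutation automorphisms — again a permutation automorphism — and restricts on $\D_n$ to $\beta\circ(\bigotimes_kc)=\alpha$. This also gives the last sentence of the statement.

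\medskip
\noindent\emph{Necessity.} Let $\lambda_w$ be an endomorphism of $\O_n$ with $\lambda_w|_{\D_n}=\alpha$. Since $\alpha\in\mathrm{Aut}(\D_n)$ the map $\lambda_w|_{\D_n}$ is bijective, and combined with the intertwining $\lambda_w\circ\varphi=\mathrm{Ad}(w)\circ\varphi\circ\lambda_w$ a short computation gives, for every $k\ge0$ and $x\in\D_n^1$,
\[
w\,\varphi^{k}(x)\,w^*=\varphi^{k}\bigl(\rho_k(x)\bigr),\qquad \rho_0:=\sigma_0,\quad\rho_k:=\sigma_k\sigma_{k-1}^{-1}\ (k\ge1)
\]
(for instance, with $z:=\varphi^{k}(\sigma_k^{-1}(x))\in\D_n$ one has $\lambda_w(z)=\varphi^{k}(x)$, hence $w\varphi^{k+1}(x)w^*=w\varphi(\lambda_w(z))w^*=\lambda_w(\varphi(z))=\alpha(\varphi^{k+1}(\sigma_k^{-1}(x)))=\varphi^{k+1}(\rho_{k+1}(x))$). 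Thus $\mathrm{Ad}(w)$ maps each shell $\varphi^k(\D_n^1)$ onto itself, acting there by $\rho_k$; as these shells generate $\D_n$, the unitary $w$ normalizes the MASA $\D_n$ and $\mathrm{Ad}(w)|_{\D_n}=\bigotimes_k\rho_k$ is itself product type. Note that ``$\sigma_k$ eventually constant'' means exactly ``$\rho_k=\mathrm{id}$ for all large $k$''. It therefore remains to show that a unitary normalizer of $\D_n$ can only induce a product-type automorphism of $\D_n$ permuting finitely many of the tensor coordinates.

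\medskip
For this I would use the groupoid picture of the regular MASA: $\O_n=C^*(\mathcal G)$ for the (Hausdorff, ample) Deaconu--Renault groupoid $\mathcal G$ of the one-sided full $n$-shift $\sigma$ on the Cantor set $K=\{1,\dots,n\}^{\mathbb N}$, the spectrum of $\D_n$, with $\D_n=C(\mathcal G^{(0)})$. A unitary normalizer of this Cartan subalgebra is supported on a clopen bisection of $\mathcal G$; consequently the homeomorphism $h$ of $K$ it implements has graph contained in $\mathcal G$, i.e. for every $y\in K$ there are $m,l\in\mathbb N$ with $\sigma^m(h(y))=\sigma^l(y)$. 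Suppose $\rho_k\ne\mathrm{id}$ for infinitely many $k$. Then $\bigcup_{m,l}\{y\in K:\sigma^m(h(y))=\sigma^l(y)\}$ is a countable union of closed sets, none of which contains a cylinder: a cylinder $[\mu]$ with $\sigma^m(h(\mu\gamma))=\sigma^l(\mu\gamma)$ for all $\gamma$ would force, comparing the coordinates of $h(\mu\gamma)$ and of $\mu\gamma$ and letting $|\mu|\to\infty$, both $m=l$ and $\rho_j=\mathrm{id}$ for all large $j$, against the assumption. By Baire's theorem this union is a proper subset of $K$, contradicting the previous sentence. Hence $\rho_k=\mathrm{id}$ eventually and $\sigma_k$ is eventually constant.

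\medskip
The combinatorial step (reading off the $\rho_k$) is routine, the point being that $\lambda_w|_{\D_n}$ is invertible; the real obstacle is the last step, namely controlling an \emph{arbitrary} unitary normalizer of $\D_n$, for which the bisection description of the Cartan pair $(\O_n,\D_n)$ seems to be
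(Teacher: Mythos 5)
Your argument is correct, but the decisive step of the necessity direction is handled by a genuinely different method than the paper's. Both proofs begin the same way: from an extension $\lambda_w$ one reads off that ${\rm Ad}(w)$ preserves each $\varphi^k(\D_n^1)$ (acting there by $\rho_k=\sigma_k\sigma_{k-1}^{-1}$), so that $w$ normalizes $\D_n$. At that point the paper invokes Power's concrete description of the normalizer, $w=w'v$ with $w'\in\S_n$ and $v\in\U(\D_n)$, observes that $\lambda_w|_{\D_n}=\lambda_{w'}|_{\D_n}$, and then uses invariance of the trace $\tau|_{\D_n}$ under ${\rm Ad}(w')$ to force $w'\in\S_n\cap\F_n$, i.e.\ $w'$ is a finite permutation matrix; preservation of each $\varphi^k(\D_n^1)$ then factors $w'=w_1\varphi(w_2)\cdots\varphi^r(w_{r+1})$, which simultaneously yields the eventual constancy of the $\sigma_k$ and, via Theorem \ref{mainthm}, the permutation automorphism extending $\alpha$. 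You instead pass to Renault's Cartan/groupoid picture of $(\O_n,\D_n)$: a unitary normalizer is supported on a compact open bisection of the Deaconu--Renault groupoid, so the induced homeomorphism $h$ of the Cantor set satisfies $\sigma^m(h(y))=\sigma^l(y)$ for some $m,l$ depending on $y$, and a Baire-category argument (your check that no set $\{y:\sigma^m(h(y))=\sigma^l(y)\}$ contains a cylinder when infinitely many $\rho_k\neq{\rm id}$ is sound) rules out a genuinely infinite product of permutations. Your route is more conceptual and would generalize to other Cartan pairs of shift type, and it cleanly isolates the dynamical obstruction; the price is importing Renault's normalizer--bisection theorem, a heavier external input than Power's result, and your necessity argument by itself does not exhibit the extending permutation automorphism --- though your sufficiency argument (factoring $\alpha$ through a Bogolubov permutation automorphism $\lambda_{v_c}$ and an eventually trivial product automorphism handled by Theorem \ref{mainthm}) supplies that and is a welcome explicit treatment of the converse, which the paper leaves implicit. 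One cosmetic point: the phrase ``letting $|\mu|\to\infty$'' is unnecessary --- the contradiction is already obtained for a fixed cylinder $[\mu]$ by choosing the free coordinates beyond $|\mu|$.
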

\begin{proof}
Suppose that $\alpha$ extends to an endomorphism $\lambda_u$
of $\O_n$. Then we have $\lambda_u(\varphi^k(\D_n^1))=\varphi^k(\D_n^1)$ for all $k$ and an
easy induction shows that ${\rm Ad}(u)$ preserves each $\varphi^k(\D_n^1)$. Thus, in
particular, $u$ is in the normalizer of $\D_n$. Consequently (see \cite{Pow}), $u=wv$ with
$w\in \S_n$ and $v\in \U(\D_n)$, where $\S_n$ is the subgroup of $\U(\O_n)$
of unitaries that can be written as finite sum of words in $S_i$ and $S_i^*$. 
But then it follows that the restriction $\alpha$ of $\lambda_u$ to $\D_n$ coincides 
with the restriction of $\lambda_w$ to $\D_n$. Thus we have that ${\rm Ad}(w)$ 
preserves each of $\varphi^k(\D_n^1)$. It follows that the restriction to $\D_n$ 
of the trace $\tau$ is ${\rm Ad}(w)$-invariant. 
This however is only possible if $w$ belongs to $\F_n$, that is $w$ is a
permutation matrix. Since ${\rm Ad}(w)$ preserves each of $\varphi^k(\D_n^1)$ it follows 
that we have $w=w_1 \varphi(w_2) \varphi^2(w_3) \cdots \varphi^r(w_{r+1})$ for some 
positive integer $r$ and permutation matrices $w_j$ in $\F_n^1$, $j=1,\ldots,r$. 
This implies that the restriction of $\lambda_w$ to $\varphi^k(\F_n^1)$ 
coincides with such restriction of ${\rm Ad}\big(\varphi^k(w_{k+1}w_k \cdots w_2 w_1)\big)$. 
This means that $\lambda_w$ in restriction to $\F_n$ is 
a product type automorphism of $\F_n$ for which the limit in equation (\ref{eqn})
equals 1 (actually the terms of the sequence eventually stabilize), and
thus by Theorem \ref{mainthm} $\lambda_w$ is (and thus $\alpha$ extends to) an
automorphism of $\O_n$.
\end{proof}

We denote by ${\rm EndAut}(\O_n,\D_n)$ the subsemigroup of ${\rm End}(\O_n)$
of those endomorphisms that restrict to automorphisms of $\D_n$
and by ${\rm End}_{\D_n}(\O_n)$ the subsemigroup of ${\rm EndAut}(\O_n,\D_n)$ 
of those endomorphisms acting trivially on $\D_n$.
By the analysis in \cite{CS}, it is already known that 
$${\rm Aut}(\O_n,\D_n) \subsetneq {\rm EndAut}(\O_n,\D_n) \ . $$
However, the following result is already implicit in \cite{Cun2}.

\begin{proposition} \label{fixed}
With the above notation one has
$${\rm Aut}_{\D_n}(\O_n) = {\rm End}_{\D_n}(\O_n).$$
\end{proposition}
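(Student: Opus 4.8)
The plan is to show the two inclusions. One direction is trivial: any automorphism of $\O_n$ fixing $\D_n$ pointwise is in particular an endomorphism of $\O_n$ fixing $\D_n$ pointwise, so ${\rm Aut}_{\D_n}(\O_n) \subseteq {\rm End}_{\D_n}(\O_n)$. The content is the reverse inclusion: an endomorphism $\lambda_u$ of $\O_n$ that restricts to the identity on $\D_n$ is automatically surjective.

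The first step is to identify the unitary $u$ implementing such an endomorphism. Since $\lambda_u(d) = d$ for every $d \in \D_n$, in particular $\lambda_u$ fixes $\D_n^1$ and hence commutes with each diagonal projection $S_i S_i^*$; working this out gives $u S_i S_i^* u^* = S_i S_i^*$, and more generally ${\rm Ad}(u)$ fixes every $\varphi^k(\D_n^1)$. In fact, from $\lambda_u(d)=d$ for $d \in \D_n^k$ one extracts, using the relation $S_i a = \varphi(a)S_i$ and the Cuntz relations, that $u$ commutes with all of $\D_n$, i.e. $u \in \D_n' \cap \O_n$. But $\D_n$ is a MASA in $\F_n$ and, more to the point, a standard fact (see \cite{Cun2}) is that $\D_n' \cap \O_n = \D_n$ itself — equivalently $u$ is a diagonal unitary. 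So $\lambda_u$ is the endomorphism determined by a unitary $u \in \U(\D_n)$.

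The second step uses the defining property once more to pin down $u$. The condition $\lambda_u|_{\D_n}={\rm id}$ means $u d u^* = d$ for $d \in \D_n$, which is automatic once $u \in \U(\D_n)$ and tells us nothing new; the real constraint is $\lambda_u(S_i S_i^*) = u S_i S_i^* u^* = S_i S_i^*$ together with the deeper requirement that $\lambda_u$ fix \emph{all} of $\D_n$, not just $\D_n^1$. Writing $\lambda_u(S_\alpha S_\alpha^*) = u\,\varphi(u)\,\varphi^2(u)\cdots S_\alpha S_\alpha^* \cdots$ and demanding this equal $S_\alpha S_\alpha^*$ for every multiindex forces $u\,\varphi(u)\,\varphi^2(u)\cdots\varphi^{k-1}(u)$ to lie in $(\D_n^k)' \cap \O_n$ for all $k$, hence (since $u \in \U(\D_n)$) $u\varphi(u)\cdots\varphi^{k-1}(u)$ is diagonal and commutes appropriately — in the end one shows the telescoping product $u_k := u\varphi(u)\cdots\varphi^{k-1}(u)$ converges, or more simply that $\lambda_u^{-1}$ is realized by $\lambda_{u^*}$ on the nose. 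Concretely: since $u\in\U(\D_n)$, the map $\lambda_{u^*}$ is also an endomorphism of $\O_n$, and $\lambda_{u^*}\circ\lambda_u = \lambda_{u^*\varphi(u^*)\cdots}$... I will instead argue that $\lambda_u$ has a two-sided inverse directly: define $\psi(S_i) = u^* S_i$; then $\psi$ is a well-defined unital endomorphism (the $u^*S_i$ are isometries with orthogonal ranges summing to $1$), and $\psi\circ\lambda_u(S_i) = \psi(uS_i) = \lambda_{u^*}(u)\,u^*S_i$. Since $u\in\D_n$, $\lambda_{u^*}$ fixes $\D_n$ pointwise by the same computation (it too restricts to the identity on $\D_n$, being implemented by a diagonal unitary whose associated endomorphism we must check fixes $\D_n$ — here is where I use that $u$ fixing $\D_n$ pointwise forces the stronger coherence $u\varphi(u)\cdots\varphi^{k}(u)\in\U(\D_n)$ with the right commutation, giving $\lambda_{u^*}(u)=u$), so $\psi\circ\lambda_u(S_i) = u\,u^*S_i = S_i$, and symmetrically $\lambda_u\circ\psi = {\rm id}$. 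Hence $\lambda_u$ is an automorphism.

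\textbf{The main obstacle} I expect is precisely the point glossed over above: verifying that the hypothesis ``$\lambda_u$ acts trivially on $\D_n$'' is strong enough to force $u\in\U(\D_n)$ together with the cocycle-type coherence making $\lambda_{u^*}$ also act trivially on $\D_n$. The clean way is to observe that ${\rm End}_{\D_n}(\O_n)$ is a semigroup and to show any $\lambda_u$ in it satisfies $u = u^*$-conjugation-invariant enough that $\lambda_{u^*}$ is its inverse — alternatively, invoke \cite{Cun2} directly, since the excerpt asserts the proposition is ``already implicit'' there: the argument there identifies $\U(\D_n)$-endomorphisms fixing $\D_n$ with the group of such $u$ under the twisted product $u \star u' = u\varphi(u')$ (wait, that is not a group in general) — so the honest statement is that fixing $\D_n$ pointwise is equivalent to $u$ being a \emph{coboundary-free} diagonal unitary, and the set of these is a group under $\star$, with inverse given by an explicit diagonal unitary $\bar u$ satisfying $\lambda_{\bar u} = \lambda_u^{-1}$. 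I would spell this group structure out as the core of the proof, with the injectivity/surjectivity of $\lambda_u$ then immediate, and close by noting this recovers the known fact that ${\rm Aut}_{\D_n}(\O_n)$ is exactly the image of this group.
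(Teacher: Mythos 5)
Your argument is correct, and its first half coincides with the paper's: from $\lambda_u|_{\D_n}={\rm id}$ you deduce that ${\rm Ad}(u)$ fixes every $\varphi^k(\D_n^1)$, hence $u\in\D_n'\cap\O_n=\D_n$ because $\D_n$ is a MASA. Where you diverge is in the final step: the paper simply cites \cite{Cun2} for the fact that $\lambda_u\in{\rm Aut}_{\D_n}(\O_n)$ whenever $u\in\U(\D_n)$, whereas you unpack that citation by exhibiting $\lambda_{u^*}$ as an explicit two-sided inverse. That step is sound once stated cleanly: for \emph{any} $v\in\U(\D_n)$ one has $\lambda_v(S_\alpha S_\alpha^*)=v\varphi(v)\cdots\varphi^{k-1}(v)\,S_\alpha S_\alpha^*\,\varphi^{k-1}(v^*)\cdots v^*=S_\alpha S_\alpha^*$, since all factors lie in the commutative algebra $\D_n$; so $\lambda_{u^*}|_{\D_n}={\rm id}$, in particular $\lambda_{u^*}(u)=u$, and therefore $\lambda_{u^*}\lambda_u=\lambda_{\lambda_{u^*}(u)\,u^*}=\lambda_1={\rm id}$, and symmetrically $\lambda_u\lambda_{u^*}={\rm id}$. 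This yields a self-contained proof, which is arguably what is ``implicit'' in \cite{Cun2}. The detour in your second paragraph --- the convergence of the telescoping products $u\varphi(u)\cdots\varphi^{k-1}(u)$ and the `$\star$'-group digression --- is unnecessary and should be cut: no convergence is needed, only the observation that each finite product is a diagonal unitary and hence commutes with $S_\alpha S_\alpha^*$.
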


\begin{proof} 
Let $u$ be a unitary in $\O_n$. If $\lambda_u(x) = x$ for all $x \in \D_n$ then
it is easy to see by induction on $k$ that $u$ commutes with $\varphi^k(\D_n^1)$
for all $k \geq 1$, 
and therefore
$$u \in \Big(\bigcup_{k \geq 0} \varphi^k(\D_n^1)\Big)' \cap \O_n = \D_n ' \cap \O_n \ . $$
$\D_n$ being a MASA in $\O_n$ one thus has that $u \in \D_n$ 
and therefore $\lambda_u \in {\rm Aut}_{\D_n}(\O_n)$ by \cite{Cun2}.
\end{proof}

\begin{theorem}\label{endaut} 
The restriction map
\begin{equation}
r: {\rm EndAut}(\O_n,\D_n) \to {\rm Aut}(\D_n)
\end{equation}
is not surjective. 
Furthermore, $r({\rm EndAut}(\O_n,\D_n))$ is not a subgroup of ${\rm Aut}(\D_n)$,
and it is the disjoint union of the subgroup of those automorphisms extensible to automorphisms of $\O_n$,
and the subsemigroup of those automorphisms extensible to proper endomorphisms of $\O_n$.
\end{theorem}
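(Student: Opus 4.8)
The plan is to prove Theorem~\ref{endaut} in two parts: first exhibit an automorphism of $\D_n$ that is not in the image of $r$, and then analyze the structure of $r({\rm EndAut}(\O_n,\D_n))$. For the first part, I would invoke Proposition~\ref{diagonal}: take a product-type automorphism $\alpha$ of $\D_n$ (i.e.\ $\alpha(\varphi^k(\D_n^1)) = \varphi^k(\D_n^1)$ for all $k$) whose action on the factors $\varphi^k(\D_n^1) \cong \mathbb C^n$ is \emph{not} eventually the identity --- for instance, let $\alpha$ act on every factor by the same fixed nontrivial cyclic permutation of the $n$ coordinates. By Proposition~\ref{diagonal} such an $\alpha$ does not extend even to a proper endomorphism of $\O_n$, so $\alpha \notin r({\rm EndAut}(\O_n,\D_n))$, proving non-surjectivity.

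For the structural statement, I would proceed as follows. Decompose ${\rm EndAut}(\O_n,\D_n)$ into those $\lambda_u$ that happen to lie in ${\rm Aut}(\O_n)$ and those that are proper endomorphisms; call the images under $r$ the sets $G$ and $P$ respectively, so that $r({\rm EndAut}(\O_n,\D_n)) = G \cup P$. First I would check $G$ is a subgroup: it is the image under $r$ of the subgroup ${\rm Aut}(\O_n,\D_n) \subseteq {\rm EndAut}(\O_n,\D_n)$, and since restricting a \emph{bijective} map of $\O_n$ preserving $\D_n$ to $\D_n$ yields a bijection of $\D_n$, and composition and inversion are respected, $G = r({\rm Aut}(\O_n,\D_n))$ is indeed a subgroup of ${\rm Aut}(\D_n)$. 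Next I would argue $G$ and $P$ are disjoint: if an automorphism $\beta$ of $\D_n$ extended both to some $\lambda_u \in {\rm Aut}(\O_n)$ and to some proper endomorphism $\lambda_w \in {\rm End}(\O_n)$, then $\lambda_u^{-1} \circ \lambda_w$ would be an endomorphism of $\O_n$ fixing $\D_n$ pointwise; by Proposition~\ref{fixed} (together with the fact that ${\rm Aut}_{\D_n}(\O_n) = {\rm End}_{\D_n}(\O_n)$) this forces $\lambda_u^{-1}\circ\lambda_w$ to be an automorphism, hence $\lambda_w$ is an automorphism, a contradiction. This shows the union $G \cup P$ is disjoint. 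Finally, $r({\rm EndAut}(\O_n,\D_n))$ is not a subgroup: since by \cite{CS} the inclusion ${\rm Aut}(\O_n,\D_n) \subsetneq {\rm EndAut}(\O_n,\D_n)$ is strict, the set $P$ is nonempty, so $G \cup P$ strictly contains the subgroup $G$; but if $G \cup P$ were a subgroup it would have to equal $G$ (an element $\beta \in P$ has an inverse in ${\rm Aut}(\D_n)$, and if both $\beta$ and $\beta^{-1}$ lay in $P$ then $\beta \circ \beta^{-1} = \mathrm{id} \in P \cap G$, contradicting disjointness, so no such group structure is possible once $P \neq \emptyset$ and $G \cap P = \emptyset$).

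The main obstacle I anticipate is making the disjointness argument fully rigorous --- specifically, ensuring that composing an automorphism extension with the inverse of another extension genuinely lands in ${\rm End}_{\D_n}(\O_n)$ and then correctly applying Proposition~\ref{fixed} to conclude it is an automorphism. One must be careful that "$\lambda_u^{-1} \circ \lambda_w$" is well-defined as an endomorphism of $\O_n$ (it is, since $\lambda_u$ is invertible) and that it acts trivially on $\D_n$ (it does, since both restrict to $\beta$ on $\D_n$). The passage from "$\lambda_u^{-1}\circ \lambda_w \in {\rm Aut}(\O_n)$" back to "$\lambda_w \in {\rm Aut}(\O_n)$" is then immediate. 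A secondary point requiring care is confirming that the product-type $\alpha$ chosen in the first part really is an automorphism of $\D_n$ (not merely of the dense subalgebra) and really has non-eventually-trivial action on the factors, so that Proposition~\ref{diagonal} applies cleanly.
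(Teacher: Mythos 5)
Your structural analysis of $r({\rm EndAut}(\O_n,\D_n))$ --- disjointness of $G$ and $P$ via Proposition~\ref{fixed} applied to $\lambda_u^{-1}\circ\lambda_w$, nonemptiness of $P$ from the strict inclusion ${\rm Aut}(\O_n,\D_n)\subsetneq{\rm EndAut}(\O_n,\D_n)$ of \cite{CS}, and the resulting failure of the union to be a subgroup --- is correct and is essentially the argument the paper gives (the paper phrases it as: the inverse of $\lambda_u|_{\D_n}$, for $\lambda_u$ a proper endomorphism preserving $\D_n$, admits no extension at all).

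However, your witness for non-surjectivity does not work. The criterion in Proposition~\ref{diagonal} is that the actions on the factors $\varphi^k(\D_n^1)$ be \emph{eventually identical to one another} (i.e.\ eventually constant as a sequence of permutations of $\{1,\dots,n\}$), not eventually equal to the identity. You can read this off from the proof of that proposition: the extending unitary has the form $w = w_1\varphi(w_2)\cdots\varphi^r(w_{r+1})$, so the action on $\varphi^k(\D_n^1)$ for $k>r$ is conjugation by the \emph{fixed, generally nontrivial} permutation $w_{r+1}\cdots w_1$. Concretely, the automorphism you propose --- the same nontrivial cyclic permutation $\sigma$ on every factor --- is precisely the restriction to $\D_n$ of the Bogolubov (permutation) automorphism $\lambda_{P_\sigma}$ of $\O_n$, where $P_\sigma\in\U(\F_n^1)$ is the corresponding permutation matrix; it therefore lies in the image of $r$, and indeed if "eventually identical" meant "eventually the identity" the only-if direction of Proposition~\ref{diagonal} would be false. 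The fix is immediate: choose a product-type automorphism whose factor actions are \emph{not} eventually constant, e.g.\ acting by the identity on $\varphi^k(\D_n^1)$ for even $k$ and by a fixed nontrivial permutation for odd $k$. Such a sequence of permutations defines a genuine automorphism of $\D_n$ (it is isometric and compatible with the inclusions $\D_n^k\subset\D_n^{k+1}$), and Proposition~\ref{diagonal} then shows it extends to no endomorphism of $\O_n$. A secondary, easily repaired omission: in ruling out that $G\cup P$ is a subgroup you only treat the case $\beta^{-1}\in P$; the case $\beta^{-1}\in G$ must also be excluded (it is, since $G$ is a group, whence $\beta\in G\cap P$).
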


\begin{proof}
The first claim follows from Proposition \ref{diagonal}. 
Let $u$ be a unitary in $\O_n$ such that $\lambda_u$ is a proper endomorphism of $\O_n$ and 
$\lambda_u(\D_n)=\D_n$ (cf. \cite{CS}). We claim that $(\lambda_u |_{\D_n})^{-1} \in {\rm Aut}(\D_n)$ 
is not extensible to  an endomorphism of $\O_n$. For let $\lambda_v$ be such an extension. 
Then $\lambda_v \lambda_u(x) = x$ for all $x \in \D_n$ and thus, by Proposition \ref{fixed}, 
$\lambda_v \lambda_u$ is an automorphism of $\O_n$, a contradiction.
A similar argument also shows that if two endomorphism $\lambda_{u_1}$ and $\lambda_{u_2}$ of $\O_n$
restrict to the same automorphism of $\D_n$ then they are either both automorphisms or both 
proper endomorphisms.
\end{proof}
By the above, automorphisms of $\D_n$ obtained by restriction of proper endomorphisms of $\O_n$ 
are necessarily not of product type.

\medskip
 
Finally, we note that the subgroup $r({\rm Aut}(\O_n,\D_n))$ is not 
normal in ${\rm Aut}(\D_n)$. 
Indeed 
by Gelfand duality one has
${\rm Aut}(\D_n)\simeq {\rm Homeo}({\mathcal C})$, 
where $\mathcal C$ is the Cantor set,
and the latter group is known to be simple.

\vspace{5mm}

\noindent
Roberto Conti\\
Dipartimento di Scienze \\
Universit\`a di Chieti-Pescara ``G. D'Annunzio''\\
Viale Pindaro 42, I-65127 Pescara, Italy \\
E-mail: conti@sci.unich.it \\

\end{document}